\let\et=\etexdraw
\def\etexdraw{\drawbb\et}
\theoremstyle{plain}
\newtheorem{thm}{Theorem}[section]
\newtheorem{theorem}{Theorem}[section]
\newtheorem{thm*}{Theorem}
\newtheorem{lem}[thm]{Lemma}
\newtheorem{prop*}[thm*]{Proposition}
\newtheorem{cor}[thm]{Corollary}
\theoremstyle{definition}
\newtheorem{defi}[thm]{Definition}
\newtheorem{ex}[thm]{Example}
\newtheorem{qu}[thm]{Question}
\theoremstyle{remark}
\newtheorem{remark}[thm]{Remark}
\DeclareMathOperator{\Ker}{Ker}
\DeclareMathOperator{\Coker}{Coker}
\DeclareMathOperator{\Image}{Im}
\DeclareMathOperator{\height}{ht}
\DeclareMathOperator{\Hom}{Hom}
\DeclareMathOperator{\Ass}{Ass}
\DeclareMathOperator{\rank}{rank}
\DeclareMathOperator{\Ext}{Ext}
\DeclareMathOperator{\Ann}{ann}
\DeclareMathOperator{\Nil}{Nil}
\DeclareMathOperator{\HH}{H}
\DeclareMathOperator{\Supp}{Supp}
\DeclareMathOperator{\fm}{\mathfrak{m}}
\DeclareMathOperator{\fp}{\mathfrak{p}}
\DeclareMathOperator{\fa}{\mathfrak{a}}
\begin{document}

\title{Multiplicity bounds in prime characteristic}

\dedicatory{Dedicated to Gennady Lyubeznik on the occasion of his sixtieth birthday}

\author{Mordechai Katzman}
\address{Department of Pure Mathematics,
University of Sheffield, Hicks Building, Sheffield S3 7RH, United Kingdom}
\email{M.Katzman@sheffield.ac.uk}

\author{Wenliang Zhang}
\address{Department of Mathematics, Statistics, and Computer Science, University of Illinois at Chicago, 851 S. Morgan Street, Chicago, IL 60607-7045}
\email{wlzhang@uic.edu}

\thanks{
W.Z. is partially supported by NSF grants DMS \#1606414 and DMS \#1752081.}


\begin{abstract}
We extend a result by Huneke and Watanabe (\cite{HunekeWatanabeUpperBoundOfMultiplicity}) bounding the multiplicity of $F$-pure local rings of prime characteristic in terms of their
dimension and embedding dimensions to the case of $F$-injective, generalized Cohen-Macaulay rings. We then produce an upper bound for the multiplicity of any local Cohen-Macaulay ring of prime characteritic
in terms of their dimensions, embedding dimensions and HSL numbers. Finally, we extend the upper bounds for the multiplicity of generalized Cohen-Macaulay rings in characteristic zero which have dense $F$-injective type.
\end{abstract}

\maketitle

\section{Introduction}
In \cite{HunekeWatanabeUpperBoundOfMultiplicity}, Huneke and Watanabe proved that, if $R$ is a noetherian, $F$-pure local ring of dimension $d$ and embedding dimension $v$, 
then $e(R)\leq \binom{v}{d}$ where $e(R)$ denotes the Hilbert-Samuel multiplicity of $R$. The following was left as an open question in \cite[Remark 3.4]{HunekeWatanabeUpperBoundOfMultiplicity}:
\begin{qu}[Huneke-Watanabe]
\label{Huneke-Watanabe question}
Let $R$ be a noetherian $F$-injective local ring with dimension $d$ and embedding dimension $v$. Is it true that $e(R)\leq \binom{v}{d}$?
\end{qu}
In this note, we answer this question in the affirmative when $R$ is generalized Cohen-Macaulay.

\begin{theorem}
\label{thm: bound in GCM case}
Let $R$ be a $d$-dimensional noetherian $F$-injective generalized Cohen-Macaulay local ring of embedding dimension $v$. Then 
\[e(R)\leq \binom{v}{d}.\]
\end{theorem}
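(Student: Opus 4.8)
\emph{Overview of the plan.} The plan is to reduce the inequality to a colength estimate for a general minimal reduction of $\mathfrak{m}$, and to extract that estimate from the Frobenius actions on the local cohomology of $R$: $F$-injectivity makes each action on $H^i_{\mathfrak{m}}(R)$ injective, while the generalized Cohen--Macaulay hypothesis makes $H^i_{\mathfrak{m}}(R)$ of finite length for $i<d$.

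\emph{Reductions.} By a standard faithfully flat base change ($R\rightsquigarrow R[t]_{\mathfrak{m}R[t]}$) I may assume $k$ is infinite, and since $H^i_{\mathfrak{m}}(R)\cong H^i_{\mathfrak{m}\widehat{R}}(\widehat{R})$ compatibly with the Frobenius, I may assume $R$ is complete. If $d=0$, the Frobenius on $H^0_{\mathfrak{m}}(R)=R$ is injective, so $R$ is reduced and hence a field, and $e(R)=1=\binom{v}{0}$. Assume now $d\ge 1$. Every nonzero element of $H^0_{\mathfrak{m}}(R)$ has support $\{\mathfrak{m}\}$, hence lies in every minimal prime of $R$, i.e.\ in $\Nil(R)$; so the Frobenius acts on $H^0_{\mathfrak{m}}(R)$ both injectively and locally nilpotently, which forces $H^0_{\mathfrak{m}}(R)=0$ and $\depth R\ge 1$.

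\emph{From multiplicity to a containment.} Since $k$ is infinite, choose a minimal reduction $\mathfrak{q}=(x_1,\dots,x_d)$ of $\mathfrak{m}$ with the $x_i$ sufficiently general: then $x_1,\dots,x_d$ is a filter-regular system of parameters, $e(R)=e(\mathfrak{q};R)\le\ell(R/\mathfrak{q})$, and the $x_i$ form part of a minimal basis of $\mathfrak{m}$, so $R/\mathfrak{q}$ is Artinian of embedding dimension $v-d$. Now $\binom{v}{d}=\sum_{j=0}^{d}\binom{v-d-1+j}{j}$ is exactly the length of the largest Artinian ring of embedding dimension $v-d$ in which the $(d+1)$-st power of the maximal ideal vanishes. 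Hence the theorem follows if $\mathfrak{m}^{d+1}\subseteq\mathfrak{q}$, since that gives $\ell(R/\mathfrak{q})\le\binom{v}{d}$. (In the Cohen--Macaulay case $e(R)=\ell(R/\mathfrak{q})$, so this containment is also forced by the conclusion; this subcase already recovers, with $F$-injectivity in place of $F$-purity, the Huneke--Watanabe bound for Cohen--Macaulay rings. In general the gap $\ell(R/\mathfrak{q})-e(R)$ is measured by the finite-length modules $H^i_{\mathfrak{m}}(R)$, $i<d$, via $e(\mathfrak{q};R)=\ell(R/\mathfrak{q})-\sum_{i\ge 1}(-1)^{i-1}\ell\bigl(H_i(x_1,\dots,x_d;R)\bigr)$, and one can either prove $\mathfrak{m}^{d+1}\subseteq\mathfrak{q}$ outright, or only the weaker containment $\mathfrak{m}^{d+1}\subseteq\mathfrak{q}^{\lim}$ into the limit closure $\mathfrak{q}^{\lim}=\bigcup_{t}\bigl((x_1^{t+1},\dots,x_d^{t+1}):(x_1\cdots x_d)^t\bigr)$ and absorb the difference into that gap.)

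\emph{The core estimate, and the obstacle.} The heart of the matter is therefore this containment. Since $\mathfrak{q}$ is a reduction of $\mathfrak{m}$ one has $\mathfrak{m}^{d+1}\subseteq\overline{\mathfrak{q}^{d+1}}$, and the Brian\c{c}on--Skoda theorem in prime characteristic gives, for each $z\in\mathfrak{m}^{d+1}$, a fixed element $c$ avoiding all minimal primes of $R$ with $cz^{q}\in\mathfrak{q}^{[q]}$ for every $q=p^{e}$. The generalized Cohen--Macaulay hypothesis supplies a uniform power $\mathfrak{m}^{N}$ annihilating the obstruction to membership $z^{q}\in\mathfrak{q}^{[q]}$ (this is where the finiteness of $H^i_{\mathfrak{m}}(R)$ for $i<d$, equivalently the existence of a test element near $\mathfrak{m}$, is used), so the multiplier $c$ can be cleared after increasing the starting power of $\mathfrak{m}$ by the fixed amount $N$; and $F$-injectivity---concretely, the Frobenius-closedness of parameter ideals of $R$, which follows from injectivity of the Frobenius on $H^d_{\mathfrak{m}}(R)$ (and on the lower $H^i_{\mathfrak{m}}(R)$ for the limit-closure variant)---then descends ``$z^{q}\in\mathfrak{q}^{[q]}$ for some $q$'' back to ``$z\in\mathfrak{q}$''. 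I expect the genuinely delicate step to be precisely this synthesis: quantifying the Brian\c{c}on--Skoda multiplier and clearing it against the finite-length local cohomology sharply enough that the final exponent is exactly $d+1$ and the colength bound comes out to be exactly $\binom{v}{d}$ rather than something larger. The Cohen--Macaulay case, where there are no correction terms and the argument collapses to the clean containment $\mathfrak{m}^{d+1}\subseteq\mathfrak{q}$, is the prototype to settle first.
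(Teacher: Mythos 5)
Your overall skeleton is the same as the paper's (pass to a complete ring with infinite residue field, take a minimal reduction $J$ of $\mathfrak{m}$, bound $e(R)\le\ell(R/J)$, and reduce everything to the containment $\mathfrak{m}^{d+1}\subseteq J$ via Frobenius closure), but the two steps you defer as ``the delicate synthesis'' are exactly the mathematical content, and the route you sketch for them does not work as stated. For the containment $\mathfrak{m}^{d+1}\subseteq\overline{J^{d+1}}\subseteq J^{F}$, your plan --- characteristic-$p$ Brian\c{c}on--Skoda producing $c\,z^{q}\in J^{[q]}$ for all $q=p^{e}$, then clearing the multiplier $c$ by a uniform annihilator supplied by the finite length of $H^{i}_{\mathfrak{m}}(R)$ for $i<d$ --- is an argument about tight closure, and no uniform annihilation of local cohomology converts $z\in J^{*}$ into $z\in J^{F}$: these closures genuinely differ, even for parameter ideals in reasonable rings, so the multiplier cannot be ``cleared'' this way. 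The paper's Theorem \ref{Theorem: power of m in Frobenius closure} avoids multipliers of this kind entirely: for $z\in\overline{J^{d+1}}$ one has $c\,z^{N}\in J^{(d+1)N}$ (ordinary powers) for a single $c\in R^{o}$ and all large $N$; Artin--Rees gives $c\,z^{N}\in c\,J^{(d+1)N-k}$; one cancels $c$ because $R$ is reduced (this is where $F$-injectivity enters, making $c$ a non-zero-divisor); and then $J^{(d+1)p^{e}-k}\subseteq J^{[p^{e}]}$ for $e\gg0$ by pigeonhole, since $J$ has $d$ generators. In particular the exponent $d+1$ is not delicate at all, and no test-element or generalized Cohen--Macaulay input is needed for this step --- so the difficulty you flag in your last paragraph is not where the real issue lies.

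The second, and more serious, gap is your assertion that Frobenius-closedness of $J$ ``follows from injectivity of the Frobenius on $H^{d}_{\mathfrak{m}}(R)$ (and on the lower $H^{i}_{\mathfrak{m}}(R)$ for the limit-closure variant).'' That implication is immediate only in the Cohen--Macaulay case, where $R/J$ injects into $H^{d}_{\mathfrak{m}}(R)$. For generalized Cohen--Macaulay rings the statement that $F$-injectivity implies every parameter ideal is Frobenius closed is precisely Ma's theorem (\cite[Theorem 1.1]{MaBuchsbaum}), which is the deep input the paper quotes; you neither prove it nor cite a substitute, and you yourself label it an expectation rather than an argument. Your fallback --- proving only $\mathfrak{m}^{d+1}\subseteq J^{\lim}$ and ``absorbing the difference'' into the Koszul correction terms --- is also unjustified: it would require an inequality of the shape $e(R)\le\ell(R/J^{\lim})$, which you never establish and whose sign is not automatic. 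So the proposal is a correct outline of the paper's strategy, but both pillars (the Skoda-type containment into $J^{F}$, and the identity $J^{F}=J$ from $F$-injectivity plus generalized Cohen--Macaulayness) are missing or argued by a mechanism that would not succeed.
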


Using reduction mod $p$, one can prove an analogous result for generalized Cohen-Macaulay rings of dense $F$-injective type in characteristic 0, {\it cf.} Theorem \ref{Theorem: the bound in characteristic zero}.

We also generalize these result to Cohen-Macaulay, non-$F$-injective rings as follows.
\begin{defi}[cf.~section 4 in \cite{LyubeznikFModulesApplicationsToLocalCohomology}]
Let $A$ be a commutative ring and let $H$ be an $A$-module with Frobenius map $\theta :  H \rightarrow H$ (i.e., an additive map
such that $\theta(a h)=a^p \theta (h)$ for all $a\in A$ and $h\in H$). Write $\Nil H=\{ h\in H \,|\, \theta^e h = 0 \text{ for some } e\geq 0 \}$.
The \emph{Hartshorne-Speiser-Lyubeznik number} (henceforth \emph{abbreviated HSL number}) is defined as
$$\inf \{e\geq 0 \,|\, \theta^e \Nil H = 0\}. $$

The HSL number of a local, Cohen-Macaulay ring $(R, \mathfrak{m})$ is defined as the HSL number of the top local cohomology module $\HH^{\dim R}_\mathfrak{m} (R)$
with its natural Frobenius map.
\end{defi}
For artinian modules over a quotient of a regular ring,  HSL numbers are finite. (\cite[Proposition 4.4]{LyubeznikFModulesApplicationsToLocalCohomology}).

\bigskip
Without the $F$-injectivity assumption, we have the following upper bound in the Cohen-Macaulay case which involves the HSL number of $R$.
\begin{theorem}[Theorem \ref{Theorem: bounds with HSL numbers}]
Assume that $(R,\fm)$ is a reduced, Cohen-Macaulay noetherian local ring of dimension $d$ and embedding dimension $v$.
Let $\eta$ be the HSL number of $R$ and write $Q=p^\eta$.
Then
\[e(R)\leq Q^{v-d}\binom{v}{d}.\]
\end{theorem}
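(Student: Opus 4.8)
The plan is to reproduce the proof of Theorem~\ref{thm: bound in GCM case} in the Cohen--Macaulay case, with the quantitative control furnished by the HSL number taking the place of $F$-injectivity.

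First I would make the usual reductions. Completing $R$ and then replacing it by $R[t]_{\mathfrak{m}R[t]}$, we may assume $R$ is complete with infinite residue field; these operations leave $d$, $v$ and $e(R)$ unchanged, and, since $\HH^{d}_{\mathfrak{m}}(-)$ together with its natural Frobenius action is compatible with this flat local base change, they leave $\eta$ unchanged as well. Fix a minimal Cohen presentation $R=S/I$ with $(S,\mathfrak{n})$ complete regular local of dimension $v$ and $I\subseteq\mathfrak{n}^{2}$. Since the residue field is infinite, a sufficiently general $d$-tuple $\underline{x}=x_{1},\dots,x_{d}$ of elements of $\mathfrak{m}$ is, simultaneously, a minimal reduction of $\mathfrak{m}$ and part of a minimal generating set $x_{1},\dots,x_{v}$ of $\mathfrak{m}$; as $R$ is Cohen--Macaulay, $e(R)=\ell(R/\underline{x}R)$, and so it suffices to prove $\ell(R/\underline{x}R)\le Q^{\,v-d}\binom{v}{d}$.

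Next I would unwind the hypothesis on $\eta$. For any system of parameters $\underline{y}$ of the Cohen--Macaulay ring $R$ one has $\HH^{d}_{\mathfrak{m}}(R)\cong\varinjlim_{t}R/\underline{y}^{[t]}R$, with transition maps multiplication by $y_{1}\cdots y_{d}$ and with the natural Frobenius acting by $\theta\colon[z]\mapsto[z^{p}]$; since $\underline{y}$ is a regular sequence, the image in $\HH^{d}_{\mathfrak{m}}(R)$ of the class of $z\in R$ in $R/\underline{y}^{[t]}R$ is annihilated by a power of $\theta$ precisely when $z$ belongs to the Frobenius closure $(\underline{y}^{[t]}R)^{F}$. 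Consequently $\theta^{\eta}\Nil\HH^{d}_{\mathfrak{m}}(R)=0$ is equivalent to: for every system of parameters $\underline{y}$ of $R$, every $s\ge 1$, and every $z\in R$,
\[
z^{q}\in\underline{y}^{[sq]}R\ \text{ for some }q\ \Longrightarrow\ z^{Q}\in\underline{y}^{[sQ]}R,
\]
that is, $(\underline{y}^{[s]}R)^{F}=\{\,z\in R:\ z^{Q}\in\underline{y}^{[sQ]}R\,\}$. When $\eta=0$ this says exactly that parameter ideals of $R$ are Frobenius closed, which is the only point at which $F$-injectivity is used in the proof of Theorem~\ref{thm: bound in GCM case} in the Cohen--Macaulay case.

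Finally I would feed this into that argument. The proof of the bound $\binom{v}{d}$ controls, modulo $\underline{x}R$, the images of the $v-d$ ``transverse'' generators $x_{d+1},\dots,x_{v}$ — through a comparison of $R$ with $R^{1/q}$ (where the reducedness of $R$ is used) and the free $k[[x_{1},\dots,x_{d}]]$-module structures on $R$ and on $R^{1/q}$, together with colon ideals such as $\underline{x}^{[q]}R:_{R}\mathfrak{m}^{[q]}R$ — and the Frobenius-closedness of parameter ideals is exactly what makes that control sharp, producing the count $\binom{v}{d}$. Carrying through the same argument with only the ``detected at level $Q$'' form of the implication above leaves, in each of the $v-d$ directions complementary to $\underline{x}$, an indeterminacy of size at most $Q$, and this multiplies the count by $Q^{\,v-d}$; hence $\ell(R/\underline{x}R)\le Q^{\,v-d}\binom{v}{d}$. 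The hard part is this last step: one must check that the weaker, level-$Q$ form of Frobenius closure costs exactly one factor of $Q$ per transverse generator and no more — a careful bookkeeping with the colon ideals $\underline{x}^{[q]}R:_{R}\mathfrak{m}^{[q]}R$, equivalently with the $R$-module complementing $R$ in $R^{1/q}$ — and, separately, one must run the estimate when the residue field is imperfect. The reductions and the reformulation of the HSL hypothesis are routine, so essentially all the content lies in reproducing the $F$-injective argument with this controlled slack.
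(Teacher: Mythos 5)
There is a genuine gap. Your reductions and your unwinding of the HSL hypothesis are fine and agree with the paper: after completing (and extending the residue field) one has a minimal reduction $J=(x_1,\dots,x_d)$ of $\fm$, $e(R)=\ell(R/J)$ by Cohen--Macaulayness, and the definition of $\eta$ applied to $\HH^d_\fm(R)=\varinjlim R/(x_1^t,\dots,x_d^t)$ (whose transition maps are injective since $J$ is generated by a regular sequence) gives exactly $\left(J^F\right)^{[Q]}=J^{[Q]}$. But the heart of the theorem --- turning this into the numerical bound $Q^{v-d}\binom{v}{d}$ --- is not proved in your proposal: you explicitly defer it (``one must check that the weaker, level-$Q$ form of Frobenius closure costs exactly one factor of $Q$ per transverse generator and no more''), and the heuristic of ``an indeterminacy of size at most $Q$ in each of the $v-d$ transverse directions'' is precisely the assertion that needs an argument, not a proof of it. Moreover the template you propose to adapt is misremembered: the $\binom{v}{d}$ bound in Huneke--Watanabe and in this paper does not come from comparing $R$ with $R^{1/q}$ as a module over $k[[x_1,\dots,x_d]]$ or from colon ideals $\underline{x}^{[q]}:\fm^{[q]}$; it comes from the Skoda-type containment $\fm^{d+1}\subseteq\overline{J^{d+1}}\subseteq J^F$ (this is where reducedness is used, via Theorem \ref{Theorem: power of m in Frobenius closure}) followed by an elementary monomial count. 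Your sketch never even states this containment, and without it there is nothing to bootstrap.

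The missing step is carried out in the paper as follows, and none of its two ingredients appears in your proposal. Raising $\fm^{d+1}\subseteq J^F$ to bracket powers gives $\left(\fm^{d+1}\right)^{[Q]}\subseteq\left(J^F\right)^{[Q]}=J^{[Q]}$. Extending $x_1,\dots,x_d$ to a minimal generating set $x_1,\dots,x_d,y_1,\dots,y_{v-d}$ of $\fm$, one checks that $R/J^{[Q]}$ is spanned by the monomials
$x_1^{\gamma_1}\cdots x_d^{\gamma_d}\,y_1^{\alpha_1Q+\beta_1}\cdots y_{v-d}^{\alpha_{v-d}Q+\beta_{v-d}}$
with $0\leq\gamma_i,\beta_j<Q$ and $\alpha_1+\cdots+\alpha_{v-d}\leq d$ (any $x_i^Q$ lies in $J^{[Q]}$, and any product of $d+1$ of the $y_j^Q$ lies in $\left(\fm^{d+1}\right)^{[Q]}\subseteq J^{[Q]}$), so that $\ell\!\left(R/J^{[Q]}\right)\leq Q^{v}\binom{v}{d}$. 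Finally, since $J$ is generated by a regular sequence, $\ell\!\left(R/J^{[Q]}\right)=Q^{d}\,\ell(R/J)$, and dividing yields $e(R)=\ell(R/J)\leq Q^{v-d}\binom{v}{d}$. It is this combination --- the monomial count for $R/J^{[Q]}$ and the identity $\ell\!\left(R/J^{[Q]}\right)=Q^{d}\ell(R/J)$ --- that produces the factor $Q^{v-d}$, rather than any per-generator bookkeeping in $R^{1/q}$; supplying it is essentially the whole content of the proof, so as it stands your argument is incomplete.
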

This bound is asymptotically sharp as shown in Remark \ref{remark: asymptotically sharp}.

\section{Bounds on $F$-injective rings}

For each commutative noetherian ring $R$, let $R^o$ denote the set of elements of $R$ that are not contained in any minimal prime ideal of $R$.

\begin{remark}
\label{rem: nonzerodivisor}

If $R$ is a reduced noetherian ring, then each $c\in R^o$ is a non-zero-divisor.
\end{remark}

Given any local ring $(R,\fm)$, we can pass to $S=R[x]_{\fm R[x]}$ which admits an infinite residue field: this does not affect the multiplicity, dimension, embedding dimension and  
Cohen-Macaulyness (cf.~\cite[Lemma 8.4.2]{HunekeSwansonIntegralClosure}). In addition, since $S$ is a faithfully flat extension of $R$,
$\HH^i_{\fm S} (S) = \HH^i_{\fm} (R) \otimes_R S$ and, if $\phi_i : \HH^i_{\fm} (R) \rightarrow \HH^i_{\fm} (R)$ is the natural Frobenius map
induced by the Frobenius map $r\mapsto r^p$ on $R$, then the natural Frobenius map on $\HH^i_{\fm S} (S)$
takes an element $a \otimes x^\alpha$ to $\phi_i(a) \otimes x^{\alpha p}$. Therefore, passing to $S$ preserves HSL numbers (and hence also $F$-injectivity). Therefore, for the purpose of seeking an upper bound of multiplicity, we may assume that that all mentioned local rings $(R,\fm)$ have infinite residue fields; consequently, $\fm$ admits a minimal reduction generated by $\dim R$ elements (cf.~\cite[Proposition 8.3.7]{HunekeSwansonIntegralClosure}).

We begin with a Skoda-type theorem for $F$-injective rings which may be viewed as a generalization of \cite[Theorem 3.2]{HunekeWatanabeUpperBoundOfMultiplicity}.

\begin{thm}
\label{Theorem: power of m in Frobenius closure}
Let $(R,\fm)$ be a commutative noetherian ring of characteristic $p$ and let $\fa$ be an ideal that can be generated by $\ell$ elements. Assume that each $c\in R^o$ is a non-zero-divisor. Then
\[\overline{\fa^{\ell+1}}\subseteq \fa^F,\]
where $\overline{\fa^{\ell+1}}$ is the integral closure of $\fa^{\ell+1}$ and $\fa^F$ the Frobenius closure of $\fa$.
\end{thm}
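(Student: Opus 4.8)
The plan is to avoid any valuative or tight-closure machinery and to argue directly from an equation of integral dependence for $x$ over $\fa^{\ell+1}$: one extracts from it membership of a high power $x^q$ in a large ordinary power of $\fa$, and then a pigeonhole inclusion puts that power of $\fa$ inside the Frobenius power $\fa^{[q]}$. Write $\fa=(y_1,\dots,y_\ell)$ and, for $q=p^e$, write $\fa^{[q]}=(y_1^q,\dots,y_\ell^q)$, so that $x\in\fa^F$ means exactly that $x^q\in\fa^{[q]}$ for some such $q$. Given $x\in\overline{\fa^{\ell+1}}$, the definition of integral closure of the ideal $\fa^{\ell+1}$ provides a monic relation
\[
x^m+c_1x^{m-1}+\cdots+c_m=0, \qquad c_i\in(\fa^{\ell+1})^i\ \ (1\le i\le m).
\]
The first step is the standard elementary consequence that then $x^n\in(\fa^{\ell+1})^{n-m+1}$ for all $n\ge m$: since $c_i\in(\fa^{\ell+1})^i\subseteq\fa^{\ell+1}$, the relation already forces $x^m\in\fa^{\ell+1}$, and a short induction --- multiply by $x$, re-substitute the monic relation whenever a factor $x^m$ reappears, and track the $\fa^{\ell+1}$-adic order of the coefficients --- upgrades this to $x^n\in(\fa^{\ell+1})^{n-m+1}=\fa^{(\ell+1)(n-m+1)}$ for every $n\ge m$.

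The second ingredient is the pigeonhole inclusion $\fa^{\ell(q-1)+1}\subseteq\fa^{[q]}$, valid for every $q$: a monomial $y_1^{a_1}\cdots y_\ell^{a_\ell}$ with $a_1+\cdots+a_\ell\ge\ell(q-1)+1$ must have $a_i\ge q$ for some $i$, hence lies in $(y_i^q)\subseteq\fa^{[q]}$. It remains only to choose the exponent. For $q=p^e$ we have $x^q\in\fa^{(\ell+1)(q-m+1)}$ once $q\ge m$, and $(\ell+1)(q-m+1)\ge\ell(q-1)+1$ once $q$ is large (the left side grows like $(\ell+1)q$ and the right like $\ell q$); both conditions hold for all $e\gg0$. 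Fixing such a $q$,
\[
x^q\in\fa^{(\ell+1)(q-m+1)}\subseteq\fa^{\ell(q-1)+1}\subseteq\fa^{[q]},
\]
and hence $x\in\fa^F$, as claimed.

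I do not expect a real obstacle; the only genuine point is to realize that $\overline{\fa^{\ell+1}}$ should be used through the monic relation and not through its valuative characterization. A multiplier-based ($c$-criterion style) argument --- say producing $c\in R^o$ with $cx^q\in\fa^{[q]}$ for all $q$ --- only places $x$ in the tight closure $\fa^*$, which can be strictly larger than $\fa^F$, so it is not enough; what makes the direct approach work is that integral dependence over $\fa^{\ell+1}$ already yields the multiplier-free membership $x^q\in\fa^{(\ell+1)(q-m+1)}$, and the exponent $\ell+1$ (in place of $\ell$) is precisely what lets $(\ell+1)(q-m+1)$ overtake the pigeonhole threshold $\ell(q-1)+1$. (The argument as sketched uses neither that $R$ is local nor that every element of $R^o$ is a non-zerodivisor; that hypothesis is used in the applications of the theorem, cf.\ Remark~\ref{rem: nonzerodivisor}, not in this proof.) Specializing to an $F$-pure ring, where $\fa=\fa^F$ for every ideal $\fa$, recovers the $F$-pure case of \cite[Theorem~3.2]{HunekeWatanabeUpperBoundOfMultiplicity}.
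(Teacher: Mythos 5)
Your proof is correct, but it takes a genuinely different route from the paper's. The paper works with the multiplier characterization of integral closure: it picks $c\in R^o$ with $cx^N\in \fa^{(\ell+1)N}$ for $N\gg 1$, invokes the Artin--Rees lemma to get $cx^N\in c\,\fa^{(\ell+1)N-k}$ for a fixed $k$, and then cancels $c$ --- this is exactly where the hypothesis that every element of $R^o$ is a non-zero-divisor is used --- before finishing with the same pigeonhole inclusion $\fa^{\ell(q-1)+1}\subseteq\fa^{[q]}$ that you use. You instead extract the membership $x^n\in(\fa^{\ell+1})^{n-m+1}$ directly from the monic equation of integral dependence by an elementary induction; I checked the induction (the case $n-i<m$ forces $i\geq n-m+1$, so $c_i\in(\fa^{\ell+1})^i$ already lands in the target power) and it is sound. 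What your version buys is the elimination of both the Artin--Rees step and the non-zero-divisor hypothesis, so the containment $\overline{\fa^{\ell+1}}\subseteq\fa^F$ holds in any commutative ring of characteristic $p$; your closing remark that this hypothesis is only needed elsewhere (cf.\ Remark~\ref{rem: nonzerodivisor}) is accurate for your argument, though not for the paper's, which genuinely uses it to cancel $c$. One small caveat: your aside that a multiplier-based argument ``only places $x$ in the tight closure'' is a fair warning about the naive version, but slightly undersells the paper's route, which upgrades the multiplier statement to a multiplier-free one via Artin--Rees rather than stopping at tight closure.
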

\begin{proof}
For each $x\in \overline{\fa^{\ell+1}}$ pick $c\in R^o$ such that for $N\gg 1$,
$c x^N \in \fa^{(\ell+1)N}$ (\cite[Corollary 6.8.12]{HunekeSwansonIntegralClosure}). Note that $c$ is a non-zero-divisor by our assumptions. We have
$c x^N \in c(\fa^{(\ell+1)N} : c)\subseteq cR \cap \fa^{(\ell+1)N}$.
An application of the Artin-Rees Lemma gives a $k\geq 1$ such that
$c x^N \in c \fa^{(\ell+1)N-k}$ for all large $N$, and so
$ x^N \in \fa^{(\ell+1)N-k}$ for all large $N$.
For any large enough $N=p^e$ we have
$x^{p^e}\in \fa^{[p^e]}$, i.e., $x$, and hence $\overline{\fa^{d+1}}$  is in the Frobenius closure of $\fa$.
\end{proof}

\begin{cor}
\label{Corollary: multiplicity bound in F-injective rings}
Let $(R,\fm)$ be a $d$-dimensional noetherian local ring of characteristic $p$. Assume that $\fm$ admits a minimal reduction $J$. Then 
\begin{enumerate}

\item[(a)] $\fm^{d+1} \subseteq \overline{\fm^{d+1}} = \overline{J^{d+1}} \subseteq J^{F}$, and

\item[(b)] $e(R)\leq \binom{v}{d} + \ell(J^{F}/J)$.
\end{enumerate}
\end{cor}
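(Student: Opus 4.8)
The plan is to deduce both parts from Theorem \ref{Theorem: power of m in Frobenius closure} together with standard facts about minimal reductions and Hilbert–Samuel multiplicity. For part (a), I would first note that $J$ is generated by $d = \dim R$ elements, so $\ell = d$ in the notation of the theorem, and that integral closure is unchanged when passing from an ideal to a reduction: since $J$ is a reduction of $\fm$, we have $\overline{J} = \overline{\fm}$, and more generally $\overline{J^{d+1}} = \overline{\fm^{d+1}}$ because $J^{d+1}$ is a reduction of $\fm^{d+1}$ (taking powers of a reduction pair gives a reduction pair). The inclusion $\fm^{d+1} \subseteq \overline{\fm^{d+1}}$ is trivial. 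To apply Theorem \ref{Theorem: power of m in Frobenius closure} to $\fa = J$ I need the hypothesis that every $c \in R^{o}$ is a non-zero-divisor; this is exactly Remark \ref{rem: nonzerodivisor} once one observes that in a local ring the relevant reduction argument only requires $R$ to be reduced — but the corollary as stated does not assume $R$ reduced, so I should instead invoke the hypothesis directly or pass to the situation where it holds. (I expect the intended reading is that $R$ is reduced, or at least that $R^{o}$ consists of non-zero-divisors, as in the ambient running hypotheses; I will state this explicitly.) Granting that, Theorem \ref{Theorem: power of m in Frobenius closure} with $\ell = d$ yields $\overline{J^{d+1}} \subseteq J^{F}$, which chains together with the previous inclusions to give (a).

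For part (b), I would use the standard fact that for a minimal reduction $J$ of $\fm$ in a $d$-dimensional local ring with infinite residue field, $e(R) = e(J) = \ell(R/J')$-type estimates are available; more precisely, since $J$ is generated by a system of parameters, $e(R) = e(J;R) \leq \ell(R/J)$, with equality iff $R$ is Cohen–Macaulay. So it suffices to bound $\ell(R/J)$. I would write $\ell(R/J) = \ell(R/J^{F}) + \ell(J^{F}/J)$ and bound the first term. By part (a), $\fm^{d+1} \subseteq J^{F}$, so $\ell(R/J^{F}) \leq \ell(R/\fm^{d+1}) = \sum_{i=0}^{d} \ell(\fm^{i}/\fm^{i+1})$. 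Now $\fm/\fm^{2}$ has dimension $v$ over the residue field, so $\ell(\fm^{i}/\fm^{i+1}) \leq \binom{v+i-1}{i}$, and summing the binomials via the hockey-stick identity gives $\sum_{i=0}^{d}\binom{v+i-1}{i} = \binom{v+d}{d}$ — but that is larger than $\binom{v}{d}$, so a cruder estimate will not suffice and one must be more careful.

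The main obstacle is getting the sharp constant $\binom{v}{d}$ rather than the naive $\binom{v+d}{d}$. The fix, following Huneke–Watanabe, is to not bound $\ell(R/\fm^{d+1})$ directly but rather to use the associated graded ring: $e(R) = e(\mathrm{gr}_{\fm}(R))$ and, because $\fm^{d+1} \subseteq J^{F}$ with $J$ generated by $d$ elements forming (the images of) a homogeneous system of parameters in $\mathrm{gr}_{\fm}(R)$, one bounds the multiplicity of $\mathrm{gr}_{\fm}(R)$ by the multiplicity of the polynomial-ring-modulo-its-$(d{+}1)$st-power-truncation model, which is exactly $\binom{v}{d}$, picking up the correction term $\ell(J^{F}/J)$ from the discrepancy between $J$ and its Frobenius closure. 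Concretely: let $\overline{R} = \mathrm{gr}_{\fm}(R)$ with homogeneous maximal ideal $\overline{\fm}$ generated by $v$ elements; the images $\overline{J}$ of the minimal reduction generate an ideal with $\overline{\fm}^{\,d+1} \subseteq$ (the leading-form ideal of $J^{F}$), and a short exact sequence / length count comparing $\overline{R}/(\overline{J} + \text{stuff})$ with the generic complete intersection of $d$ general linear forms gives $e(R) \leq \binom{v}{d} + \ell(J^{F}/J)$. I expect the delicate point to be verifying that the leading forms of a minimal reduction can be taken to be $d$ general linear forms (using the infinite residue field reduction already justified in the text) so that the truncated polynomial ring model genuinely computes $\binom{v}{d}$, and keeping careful track of where the error term $\ell(J^{F}/J)$ enters the length bookkeeping.
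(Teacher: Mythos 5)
Your part (a) is essentially the paper's argument, and your opening move for part (b) is also the paper's: $e(R)\le \ell(R/J)=\ell(R/J^{F})+\ell(J^{F}/J)$, with the only real content being the estimate $\ell(R/J^{F})\le\binom{v}{d}$. (Your side remark about the hypothesis of Theorem \ref{Theorem: power of m in Frobenius closure} is fair: the corollary is applied in the paper only to reduced rings, where every element of $R^{o}$ is a non-zero-divisor.) The gap is precisely that you never establish $\ell(R/J^{F})\le\binom{v}{d}$. Bounding it by $\ell(R/\fm^{d+1})$ discards the crucial fact that $J\subseteq J^{F}$, which is why you only get $\binom{v+d}{d}$; and the proposed repair via $\mathrm{gr}_{\fm}(R)$, leading forms of $J$ and ``$d$ general linear forms'' is left as a sketch whose delicate points you yourself flag as unresolved --- and it is not the Huneke--Watanabe argument being invoked.

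The missing idea is elementary and needs no associated graded ring. Since $J$ is a minimal reduction, its $d$ minimal generators $x_{1},\dots,x_{d}$ have linearly independent images in $\fm/\fm^{2}$ and so extend to a minimal generating set $x_{1},\dots,x_{d},y_{1},\dots,y_{v-d}$ of $\fm$ (infinite residue field, as arranged at the start of Section 2; the same fact is used again in the proof of Theorem \ref{Theorem: bounds with HSL numbers}). In $R/J^{F}$ the images of $x_{1},\dots,x_{d}$ vanish, so its maximal ideal is generated by the images of $y_{1},\dots,y_{v-d}$, and by part (a) the $(d+1)$st power of that maximal ideal is zero. Hence $R/J^{F}$ is spanned over the residue field by monomials of degree at most $d$ in $v-d$ elements, and there are $\sum_{i=0}^{d}\binom{v-d+i-1}{i}=\binom{v}{d}$ such monomials, giving $\ell(R/J^{F})\le\binom{v}{d}$ directly; part (b) then follows from your length decomposition.
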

\begin{proof}

Since $\fm^{d+1} \subseteq \overline{\fm^{d+1}} = \overline{J^{d+1}}$, (a) follows from Theorem \ref{Theorem: power of m in Frobenius closure}.

For part (b), since $\overline{J^{d+1}}\subseteq J^F$ and $J$ is generated by $d$ elements, we have $\ell(R/J^{F})\leq  \binom{v}{d}$ (as in the proof of
\cite[Theorem 3.1]{HunekeWatanabeUpperBoundOfMultiplicity}). Then
\[e(R)\leq \ell (R/J)= \ell(R/J^{F})+\ell(J^{F}/J)\leq \binom{v}{d} + \ell(J^{F}/J).\]
\end{proof}

\begin{proof}[Proof of Theorem \ref{thm: bound in GCM case}]
Let $\hat{R}$ denote the completion of $R$. Then $R$ is $F$-injective and generalized Cohen-Macaulay if and only if $\hat{R}$ is so, and $e(R)=e(\hat{R})$. Hence we may assume that $R$ is complete. Since $R$ is $F$-injective, it is reduced (\cite[Remark 2.6]{SchwedeZhangBertiniTheorems}) and hence each $c\in R^o$ is a non-zero-divisor by Remark \ref{rem: nonzerodivisor}. It is proved in \cite[Theorem 1.1]{MaBuchsbaum} that a generalized Cohen-Macaulay local ring is $F$-injective if and only if every parameter ideal is Frobenius closed. Let $J$ denote a minimal reduction of $\fm$, then $J^F=J$. 
Our theorem follows immediately from Corollary \ref{Corollary: multiplicity bound in F-injective rings}.
\end{proof}

\section{Bounds on multiplicity using HSL numbers}
\begin{thm}\label{Theorem: bounds with HSL numbers}
Assume that $(R,\fm)$ is a reduced, Cohen-Macaulay noetherian local ring of dimension $d$ and embedding dimension $v$.
Let $\eta$ be the HSL number of $R$ and write $Q=p^\eta$.
Then
$e(R)\leq Q^{v-d}\binom{v}{d}$.
\end{thm}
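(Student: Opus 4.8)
The plan is to run the proof of Corollary~\ref{Corollary: multiplicity bound in F-injective rings} one Frobenius power deeper, using $\eta$ to control how far the Frobenius closure of a parameter ideal sits above the ideal. As explained at the start of Section~2 we may assume $R/\fm$ is infinite, so that $\fm$ has a minimal reduction; I would take this reduction $J=(x_1,\dots,x_d)$ generated by sufficiently general elements, so that $x_1,\dots,x_d$ are part of a minimal generating set $x_1,\dots,x_v$ of $\fm$. Since $R$ is Cohen-Macaulay, $x_1,\dots,x_d$ is a regular sequence, whence $\ell(R/J)=e(J;R)=e(R)$ and $\ell(R/J^{[Q]})=Q^{d}\ell(R/J)$ (stratify $R/(x_1^{Q},\dots,x_d^{Q})$ by its $Q^{d}$ subquotients isomorphic to $R/(x_1,\dots,x_d)$). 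So the whole problem reduces to the estimate $\ell(R/J^{[Q]})\le Q^{v}\binom{v}{d}$.

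The key input, and the place where $\eta$ is really used, is the containment
\[
(J^{F})^{[Q]}\subseteq J^{[Q]}.
\]
To prove it: because $R$ is Cohen-Macaulay, $x_1^{t},\dots,x_d^{t}$ is a regular sequence for every $t$, so in the presentation $\HH^{d}_{\fm}(R)=\varinjlim_{t}R/(x_1^{t},\dots,x_d^{t})$ each canonical map $R/(x_1^{t},\dots,x_d^{t})\to\HH^{d}_{\fm}(R)$ is injective (the colon ideals $\big((x_1^{s},\dots,x_d^{s}):(x_1\cdots x_d)^{s-t}\big)$ collapse to $(x_1^{t},\dots,x_d^{t})$). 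Under the resulting inclusion $R/J\hookrightarrow\HH^{d}_{\fm}(R)$ the natural Frobenius $\theta$ carries the class of $r$ to the class of $r^{p}$ in $R/(x_1^{p},\dots,x_d^{p})$, and iterating, $\theta^{\eta}$ carries it to the class of $r^{Q}$ in $R/J^{[Q]}\hookrightarrow\HH^{d}_{\fm}(R)$. Hence $r\in J^{F}$ exactly when the class of $r$ lies in $\Nil\HH^{d}_{\fm}(R)$, and then $\theta^{\eta}$ kills it, that is $r^{Q}\in J^{[Q]}$; taking $Q$-th powers of generators gives $(J^{F})^{[Q]}\subseteq J^{[Q]}$. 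Since $R$ is reduced, $R^{o}$ consists of non-zerodivisors (Remark~\ref{rem: nonzerodivisor}), so Corollary~\ref{Corollary: multiplicity bound in F-injective rings}(a) applies and yields $\fm^{d+1}\subseteq J^{F}$; combined with the elementary identity $(\fm^{d+1})^{[Q]}=(\fm^{[Q]})^{d+1}$ this gives
\[
(\fm^{[Q]})^{d+1}\subseteq J^{[Q]}.
\]

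It remains to bound $\ell(R/J^{[Q]})$ by a graded computation. Put $G=\operatorname{gr}_{\fm}R$, present it as $k[X_1,\dots,X_v]/I$ with $X_i\mapsto\overline{x_i}=x_i+\fm^{2}$, and use $\ell(R/J^{[Q]})=\ell_{G}(G/\operatorname{in}_{\fm}J^{[Q]})$. The ideal $\operatorname{in}_{\fm}J^{[Q]}$ contains $\overline{x_1}^{Q},\dots,\overline{x_d}^{Q}$ (the leading forms of $x_1^{Q},\dots,x_d^{Q}\in J^{[Q]}$) and, by the displayed containment, contains all products $\overline{x_{i_1}}^{Q}\cdots\overline{x_{i_{d+1}}}^{Q}$, i.e. the ideal $(\overline{x_1}^{Q},\dots,\overline{x_v}^{Q})^{d+1}$. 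Pulling these relations back to $k[X_1,\dots,X_v]$,
\[
\ell(R/J^{[Q]})\ \le\ \ell\big(k[X_1,\dots,X_v]/\big((X_1^{Q},\dots,X_d^{Q})+(X_1^{Q},\dots,X_v^{Q})^{d+1}\big)\big),
\]
and since $(X_1^{Q},\dots,X_d^{Q})+(X_1^{Q},\dots,X_v^{Q})^{d+1}=(X_1^{Q},\dots,X_d^{Q})+(X_{d+1}^{Q},\dots,X_v^{Q})^{d+1}$, the right-hand quotient is $\big(k[X_1,\dots,X_d]/(X_1^{Q},\dots,X_d^{Q})\big)\otimes_{k}\big(k[X_{d+1},\dots,X_v]/(X_{d+1}^{Q},\dots,X_v^{Q})^{d+1}\big)$, of length $Q^{d}$ times the number of monomials $X_{d+1}^{a_{d+1}}\cdots X_v^{a_v}$ with $\sum_{i>d}\lfloor a_i/Q\rfloor\le d$, namely $Q^{d}\cdot Q^{v-d}\binom{v}{d}=Q^{v}\binom{v}{d}$. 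Hence $\ell(R/J^{[Q]})\le Q^{v}\binom{v}{d}$ and $e(R)=\ell(R/J^{[Q]})/Q^{d}\le Q^{v-d}\binom{v}{d}$.

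I expect the main obstacle to be the containment $(J^{F})^{[Q]}\subseteq J^{[Q]}$: the individual transition maps in $\varinjlim_{t}R/(x_1^{t},\dots,x_d^{t})$ are \emph{not} injective, yet each term of the system does inject into $\HH^{d}_{\fm}(R)$ thanks to Cohen-Macaulayness, and it is precisely this injectivity that converts the abstract statement $\theta^{\eta}\Nil\HH^{d}_{\fm}(R)=0$ into the usable implication ``$r\in J^{F}\Rightarrow r^{Q}\in J^{[Q]}$''. The remaining ingredients are routine: the existence of a minimal reduction sitting inside a minimal generating set of $\fm$ (general elements over an infinite field), the behaviour of initial ideals under $Q$-th powers, and the monomial count. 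It is worth noting that using $(\fm^{[Q]})^{d+1}\subseteq J^{[Q]}$ alone would only yield the weaker bound $Q^{v-d}\binom{v+d}{d}$; the sharp $\binom{v}{d}$ comes exactly from also feeding in the $d$ forms $\overline{x_i}^{Q}$ coming from the generators of the reduction.
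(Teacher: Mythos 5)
Your proof is correct and follows essentially the same route as the paper: choose a minimal reduction $J$ whose generators extend to a minimal generating set of $\fm$, use the Skoda-type containment $\fm^{d+1}\subseteq J^F$ together with $(J^F)^{[Q]}\subseteq J^{[Q]}$, bound $\ell(R/J^{[Q]})$ by the count $Q^v\binom{v}{d}$ of surviving monomials, and divide by $Q^d$ using that $J$ is generated by a regular sequence. The only differences are cosmetic: you supply the (standard and correct) local-cohomology argument for $(J^F)^{[Q]}\subseteq J^{[Q]}$, which the paper asserts without proof, and you package the monomial count via $\operatorname{gr}_{\fm}R$ instead of spanning $R/J^{[Q]}$ directly.
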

\begin{proof}
We may assume that $R$ is complete since $e(R)=e(\hat{R})$. Hence $\fm$ admits a minimal reduction $J$ (generated by $d$ elements). We have
$e(R)= \ell (R/J)$, and Theorem \ref{Theorem: power of m in Frobenius closure}
shows that $\fm^{d+1} \subseteq J^F$.
Now $\left( J^F \right)^{[Q]}= J^{[Q]}$ for $Q=p^\eta$ hence
$\left(\fm^{d+1}\right)^{[Q]} \subseteq J^{[Q]}$.

Extend a set of minimal generators $x_1, \dots, x_d$ of $J$ to a minimal set of generators
$x_1, \dots, x_d, y_1, \dots y_{v-d}$ of $\fm$.  Now $R/J^{[Q]}$ is spanned by
monomials
$$x_1^{\gamma_1} \dots x_d^{\gamma_d} y_1^{\alpha_1 Q + \beta_1} \dots y_{v-d}^{\alpha_{v-d} Q + \beta_{v-d}}$$
where $0\leq \gamma_1, \dots \gamma_d, \beta_1, \dots, \beta_{v-d} < Q$ and
$0\leq \alpha_1+\dots+\alpha_{v-d}<d+1$. The number of such monomials is
$Q^v \binom{v}{d}$ and so $\ell(R/J^{[Q]})\leq Q^v \binom{v}{d}$.

Note that as $J$ is generated by a regular sequence,
$ \ell(R/J^{[Q]}) = Q^d  \ell(R/J)$ and
we conclude that
$$\ell(R/J) = \ell(R/J^{[Q]}) /Q^d \leq  Q^{v-d} \binom{v}{d} .$$
\end{proof}

\begin{remark}
\label{remark: asymptotically sharp}
  The next family of examples shows that the  bound in  Theorem \ref{Theorem: bounds with HSL numbers} is asymptotically sharp.
  
  Let $\mathbb{F}$ be a field of prime characteristic $p$, let $n\geq 2$, and
  let $S$ be $\mathbb{F}[x_1, \dots, x_n]$.  Let $\mathfrak{m}=(x_1, \dots, x_n)S$, and let $E$ denote the injective hull of the residue field of $S_{\mathfrak{m}}$.

  Define $f=\sum_{i=1}^n x_1^p \dots x_{i-1}^p x_i x_{i+1}^p \dots x_n^p$ and $h=x_1 \dots x_{n-1}$. We claim that $f$ is square-free: if this is not the case
  write $f=r^\alpha s$ where $r$ is irreducible of positive degree, and $\alpha\geq 2$.
  Let $\partial$ denote the partial derivative with respect to $x_n$.
  Note that $\partial f= h^p$ and so
  $$ h^p = \alpha r^{\alpha-1} (\partial r) s + r^\alpha  (\partial s) = r^{\alpha-1}\left(  \alpha (\partial r) s + r  (\partial s) \right).$$
  We deduce that $r$ divides $h$,  but this would imply that $x_i^2$ divides all terms of $f$ for some $1\leq i\leq n-1$, which is false.
  We conclude that $S/fS$ is reduced.

Let $R$ be the localization of $S/fS$ at $\mathfrak{m}$. We compute next the HSL number $\eta$ of $R$ using the
method described in sections 4 and 5 in \cite{KatzmanParameterTestIdealOfCMRings}.
  It is not hard to show that $\HH^{n-1}_{\mathfrak{m}}(R) \cong \Ann_E f$ where $E=\HH^n_{\mathfrak{m}}(S)$,
  and that, after identifying these, the natural Frobenius action on
  $\Ann_E f$ is given by $f^{p-1} T$ where $T$ is the natural Frobenius action on $E$.

  To find the HSL number $\eta$ of  $\HH^{n-1}_{\mathfrak{m}}(R)$ 
  we readily compute
  $I_1(f)$ (cf.~\cite[Proposition 5.4]{KatzmanParameterTestIdealOfCMRings}) to be the ideal generated by
  $\{ x_1 \dots x_{i-1} x_{i+1} \dots x_n \,|\, 1\leq i \leq n\}$ and
  \begin{eqnarray*}
    I_2(f^{p+1})& = &I_1\left( f I_1(f) \right)\\
                &=& \sum_{i=1}^n I_1\big(
                    \sum_{j=1}^{i-1} x_1^{p+1} \dots x_{j-1}^{p+1} x_j^2 x_{j+1}^{p+1} \dots x_{i-1}^{p+1} x_{i}^{p} x_{i+1}^{p+1} \dots x_n^{p+1}\\
                    &+& x_1^{p+1} \dots x_{i-1}^{p+1} x_{i} x_{i+1}^{p+1} \dots x_n^{p+1}\\
                    &+& \sum_{j=i+1}^{n} x_1^{p+1} \dots x_{i-1}^{p+1} x_i^p x_{i+1}^{p+1} \dots x_{j-1}^{p+1} x_{j}^{2} x_{j+1}^{p+1} \dots x_n^{p+1} \big)\\
    &=& I_1(f) \\
  \end{eqnarray*}
  and we deduce that $\eta=1$.

  We now compute
  $$ \Gamma_{n,p}:=\frac{\deg f}{ \binom{n}{n-1} p^\eta}=\frac{(n-1)p+1}{np} .$$
  We have $\lim_{n \rightarrow \infty} \Gamma_{n,p}=1$ and $\lim_{p \rightarrow \infty} \Gamma_{n,p}=(n-1)/n$,
  so we can find  values of $\Gamma_{n,p}$ arbitrarily close to 1. 

\end{remark}

\section{Examples}
The injectivity of the natural Frobenius action on the top local cohomology $H^d_{\fm}(R)$ does {\it not} imply $e(R)\leq \binom{v}{d}$ as shown by the following example.

\begin{ex}
   Let $S=\mathbb{Z}/2\mathbb{Z}[x,y,u,v]$, let $\mathfrak{m}$ be its ideal generated by the variables,
define
$I=(v, x)\cap  (u, x)\cap (v, y)\cap (u, y) \cap  (y, x)\cap   (v, u) \cap (y - u, x - v) =\left( xv(y-u), yu(x-v), yuv(y-u), xuv(x-v) \right)$,
and let $R=S/I$: this is a reduced
2-dimensional ring.

We compute  the following graded $S$-free resolution of $I$
$$
  \xymatrix{
    0 \ar[r] & S(-6) \ar[r]^{B} & S^4(-5) \ar[r]^{A} & S^2(-3) \oplus S^2(-4)  \ar[r] & I \ar[r] & 0\\
  }
$$
where
$$
  A=\left[\begin{array}{cccc}
            u(x-v) & yu & 0 &0\\
            0&0&xv&v(y-u)\\
            0&-x&0&v-x\\
            u-y&0&-y&0
    \end{array}\right], \quad
  B=\left[\begin{array}{c}
y\\v-x\\u-y\\x\\
    \end{array}\right]
$$
and note that $R$ has projective dimension 3, hence depth 1 and so it is not Cohen-Macaulay.
Also, we can read the Hilbert series of $R$ from its graded resolution and we obtain
$$\frac{1-2t^3-2t^4+4t^5-t^6}{(1-t)^4}=\frac{1+2t+3t^2+2t^3-t^4}{(1-t^2)}$$
and so the multiplicity of $R$ is $1+2+3+2-1=7$ exceeding $\binom{4}{2}=6$ (cf.~\cite[\S 6.1.1]{HerzogHibiMonomialIdeals}.)

Note that $R$ is not $F$-injective, but the natural Frobenius action on the top local cohomology module is injective.
\end{ex}

  From the proof of Theorem \ref{thm: bound in GCM case}, we can see that if a minimal reduction of the maximal ideal in an $F$-injective local ring $R$ is Frobenius closed then the bound $e(R)\leq \binom{v}{d}$ will hold. Hence we may ask whether minimal reductions would be Frobenius-closed in such rings (cf.~Theorem 6.5 and Problem 3 in \cite{QuyShimomotoFinjectivity}).
 However,  the following example shows this not to be the case.

\begin{ex}
  Let $S=\mathbb{Z}/2\mathbb{Z}[x,y,u,v,w]$, let $\mathfrak{m}$ be its ideal generated by the variables and let
  $I_1=(x,y) \cap  (x+y, u+w, v+w)$, $I_2=(u,v,w) \cap (x,u,v) \cap (y,u,v)=(u,v,xyw)$, and $I=I_1\cap I_2$.
  Fedder's Criterion \cite[Proposition 1.7]{FedderFPureRat} shows that $S/I_1$, $S/I_2$ and $S/(I_1+I_2)$ are $F$-pure,
  and  \cite[Theorem 5.6]{QuyShimomotoFinjectivity} implies that $S/I$ is F-injective. Also, $S/I$ is almost Cohen-Macaulay: it is 3-dimensional and its localization at $\mathfrak{m}$ has depth 2.

  Its not hard to check that the ideal $J$ generated by the images in $S/I$ of $w, y+v, x+u$ is a minimal reduction.
  However $J^F\neq J$: while $v^2\notin J$, we have
  $$v^4= xyw^2 +v^2(y+v)^2  +yvw(x+y) +(v+w) (y^2v+xyw), $$
  hence $v^2\in J^F\setminus J$.

\end{ex}

\section{Bounds in Characteristic zero}

Throughout this section $K$ will denote a field of characteristic zero,
$T=K[x_1, \dots, x_n]$,
$R$ will denote the finitely generated $K$-algebra
$R=T/I$ for some ideal $I\subseteq T$, and $\mathfrak{m}=(x_1, \dots, x_n)R$; $d$ and $v$ will denote the dimension and embedding dimension, respectively, of $R_{\mathfrak{m}}$.
We also choose $\mathbf{y}=y_1, \dots, y_d\in \mathfrak{m}$  whose images in $R_{\mathfrak{m}}$   form a minimal reduction of $\mathfrak{m}R_\mathfrak{m}$.

We may, and do assume that the only maximal ideal containing $\mathbf{y}$ is $\mathfrak{m}$. Otherwise, if $\fm_1, \dots, \fm_t$ are all the maximal ideals
  distinct from $\fm$ which contain $\mathbf{y}$, we can pick $f\in (\fm_1\cap  \dots \cap \fm_t) \setminus \fm$, and now the only
  maximal ideal containing $\mathbf{y}$ in $R_f$ is $\fm R_f$.
  We may now replace $R$ with $R^\prime=K[x_1, \dots, x_n,x_{n+1}]/I+\langle x_{n+1} f-1 \rangle\cong  R_f$ and
  since $R_{\mathfrak{m}}= (R_f)_{\mathfrak{m}}$ we are not affecting any local issues.

The main tool used in this section descent techniques described in
  \cite{HochsterHunekeTightClosureInEqualCharactersticZero}. We start by introducing a flavour of it useful for our purposes.

\begin{defi}\label{Definition: descent}
By \emph{descent objects} we mean
 \begin{itemize}
 \item[(1)] a finitely generated $K$-algebra $R$ as above,
 \item[(2)] a finite set of finitely generated $T$-modules,
 \item[(3)] a finite set of $T$ linear maps between $T$-modules in (2),
 \item[(4)] a finite set of finite complexes involving maps in (3),
 \end{itemize}

By \emph{descent data} for these descent objects we mean
 \begin{itemize}
 \item[(a)] A finitely generated $\mathbb{Z}$-subalgebra $A$ of $K$,  $T_A=A[x_1, \dots, x_n]$, $I_A\subseteq T_A$ such that with $R_A=T_A/I_A$
    \begin{itemize}
    \item[$\bullet$] $R_A \subseteq R$ induces an isomorphism $R_A\otimes_A K \cong R\otimes_A K=R$, and
    \item[$\bullet$]  $R_A$ is $A$-free.
    \end{itemize}
  \item[(b)] For each $M$ in (2), a finitely generated free $A$-submodule $M_A\subseteq M$ such that this inclusion induces an isomorphism
    $M_A\otimes_A K \cong M\otimes_A K=M$.
  \item[(c)] For every $\phi : M \rightarrow N$ in (3) an $A$ linear map $\phi_A : M_A \rightarrow N_A$ such that
    \begin{itemize}
    \item[$\bullet$]  $\phi_A \otimes 1: M_A \otimes_A K \rightarrow N_A \otimes_A K$ is the map $\phi$, and
    \item[$\bullet$]  $\Image \phi$, $\Ker \phi$ and $\Coker \phi$ are $A$-free.
    \end{itemize}
  \item[(d)] For every homological complex
  $$\mathcal{C}_\bullet = \dots \xrightarrow{\partial_{i+2}} C_{i+1} \xrightarrow{\partial_{i+1}} C_i \xrightarrow{\partial_{i}} \dots $$
  in (4), an homological complex
  $${\mathcal{C}_A}_\bullet= \dots \xrightarrow{(\partial_{i+2})_A} (C_{i+1})_A \xrightarrow{(\partial_{i+1})_A} (C_i)_A \xrightarrow{(\partial_{i})} \dots $$
  such that $\HH_i({\mathcal{C}_A} \otimes_A K)=\HH_i({\mathcal{C}_A} )\otimes_A K$.
  For every cohomological complex in (4), a similar corresponding contruction.
\end{itemize}
\end{defi}

Descent data exist: see \cite[Chapter 2]{HochsterHunekeTightClosureInEqualCharactersticZero}.

Notice that for any maximal ideal $\mathfrak{p}\subset A$, the fiber $\kappa(\mathfrak{p})=A/\mathfrak{p}$
is a finite field. Given any property $\mathcal{P}$ of rings of prime characteristic, we say that $R$ as in the definition above as
\emph{dense $\mathcal{P}$ type} if there exists descent data $(A, R_A)$ and such that for all maximal ideals $\mathfrak{p}\subset A$ the fiber
$R_A \otimes_A \kappa(\mathfrak{p})$ has property $\mathcal{P}$.

Notice also that for any complex $\mathcal{C}$ of free $A$ modules where the kernels and cokernels of all maps are $A$-free (as in Definition \ref{Definition: descent}(c) and (d)),
$\HH_i (\mathcal{C} \otimes_A \kappa(\fp)) = \HH_i (\mathcal{C}) \otimes_A \kappa(\fp)$.

The main result in this section is the following theorem.

\begin{thm}\label{Theorem: the bound in characteristic zero}
If $R_\mathfrak{m}$ is Cohen-Macaulay on the punctured spectrum and has dense $F$-injective type, then
  $e(R_\mathfrak{m})\leq \binom{v}{d}$.
\end{thm}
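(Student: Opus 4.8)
The plan is to mimic the characteristic-$p$ argument of Theorem~\ref{thm: bound in GCM case} after spreading out to a finitely generated $\mathbb{Z}$-algebra $A\subseteq K$. First I would set up descent data $(A,R_A)$ in the sense of Definition~\ref{Definition: descent}, including $A$-free versions $\mathbf{y}_A=y_{1,A},\dots,y_{d,A}$ of the chosen minimal reduction of $\mathfrak{m}R_\mathfrak{m}$, and an $A$-free version of the Koszul complex $K_\bullet(\mathbf{y};R)$ together with the inclusion of the ideal $\langle\mathbf{y}\rangle$ into $R$. I would also include in the descent objects finitely generated $T$-module presentations of $R/\langle\mathbf{y}\rangle$, of $R/\mathfrak{m}^{d+1}$, and of the relevant local cohomology $H^i_\mathfrak{m}(R)$ for $i<d$ (realised via a \v{C}ech complex on the variables), so that after reduction modulo a maximal ideal $\mathfrak{p}\subset A$ all Betti numbers, lengths, Hilbert functions, and local cohomology modules specialise correctly. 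Since the multiplicity, dimension and embedding dimension are read off from the Hilbert function of the associated graded ring, shrinking $A$ (inverting finitely many elements) ensures $e\big((R_A\otimes_A\kappa(\mathfrak{p}))_{\mathfrak{m}}\big)=e(R_\mathfrak{m})$, that the fiber has the same $d$ and $v$, that $\mathbf{y}$ remains a system of parameters generating a reduction of the maximal ideal, and that the punctured-spectrum Cohen-Macaulay locus is preserved (so the fiber is generalized Cohen-Macaulay, in fact its localization is Cohen-Macaulay on the punctured spectrum).

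The second step is to use the dense $F$-injective type hypothesis: there is a Zariski-dense set of maximal ideals $\mathfrak{p}\subset A$ for which $R_A\otimes_A\kappa(\mathfrak{p})$ is $F$-injective. Fix such a $\mathfrak{p}$ with the compatibility properties of the previous paragraph also holding, and set $\overline{R}=(R_A\otimes_A\kappa(\mathfrak{p}))_{\mathfrak{m}}$. Then $\overline{R}$ is an $F$-injective generalized Cohen-Macaulay local ring of characteristic $p$, with dimension $d$, embedding dimension $v$, and a minimal reduction $\overline{J}$ of its maximal ideal generated by $d$ elements. By Ma's theorem (\cite[Theorem~1.1]{MaBuchsbaum}), every parameter ideal of $\overline{R}$ is Frobenius closed, so $\overline{J}^{\,F}=\overline{J}$, and Corollary~\ref{Corollary: multiplicity bound in F-injective rings} gives $e(\overline{R})\leq\binom{v}{d}$. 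Since $e(\overline{R})=e(R_\mathfrak{m})$ by construction, this yields the desired bound.

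The main obstacle I anticipate is the bookkeeping in the first step: I must arrange, simultaneously and for a single dense family of primes $\mathfrak{p}$, that (i) $\mathbf{y}_A$ reduces to a system of parameters cutting out a reduction of the maximal ideal of the fiber, (ii) the Hilbert functions of $R$ and $\overline{R}$ at $\mathfrak{m}$ agree so that $d$, $v$ and $e$ are preserved, and (iii) the local cohomology modules $H^i_\mathfrak{m}$ for $i<d$ have finite length on both sides and specialise compatibly, so that ``generalized Cohen-Macaulay'' descends. Each of (i)--(iii) is a finite condition (non-vanishing of finitely many minors, ranks of finitely many specialised maps being correct, freeness of finitely many cokernels), so each holds after inverting an element of $A$; the density hypothesis then intersects the resulting open set nontrivially. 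Making the reduction statement ``$\mathbf{y}$ generates a reduction'' descend is the subtlest point: I would encode it as the equality $\mathfrak{m}^{k+1}=\mathbf{y}\mathfrak{m}^k$ in the graded pieces for one fixed large $k$ (which holds generically after enlarging $A$), which is an equality of finitely generated $A$-modules and hence survives a general reduction. Once this data is assembled, the rest is a direct transcription of the positive-characteristic proof.
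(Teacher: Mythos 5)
Your overall strategy --- spread out to a finitely generated $\mathbb{Z}$-subalgebra $A\subseteq K$, reduce modulo a suitable maximal ideal $\fp$ so that the fiber is an $F$-injective generalized Cohen--Macaulay local ring with the same $d$ and $v$ and a minimal reduction that is Frobenius closed by Ma's theorem, then quote the characteristic-$p$ result --- is exactly the paper's. Your handling of the reduction property via the single equation $\fm^{k+1}=\mathbf{y}\fm^{k}$, and your intersection of the dense set of $F$-injective fibers with the open set where the other conditions hold, also match Lemma \ref{Lemma: descent properties}. (One technical quibble: the \v{C}ech complex is not a complex of finitely generated $T$-modules, so it cannot serve as a descent object in the sense of Definition \ref{Definition: descent}; the paper instead encodes ``Cohen--Macaulay on the punctured spectrum'' as the vanishing $\fm^{k}\Ext^{i}_{T}(R,T)=0$, computed from a finite free resolution.)

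The genuine gap is in your step (ii), the preservation of the multiplicity itself. You assert that $e$ is ``read off from the Hilbert function of the associated graded ring'' and that matching it is ``a finite condition,'' but the Hilbert--Samuel function consists of infinitely many lengths $\ell(R_{\fm}/\fm^{n}R_{\fm})$ and $e$ is the asymptotic leading coefficient; no finite list of minors controls it, and you give no mechanism forcing $\ell(R_{\kappa(\fp)}/\fm^{n})=\ell(R/\fm^{n})$ for all $n$ simultaneously (one would need, say, generic $A$-freeness of every graded piece of the associated graded ring together with compatibility of forming it with base change --- true, but this is itself the hard content of the step, not a consequence of finitely many open conditions). This is precisely the point around which the paper's proof is organized: having arranged that $\fm$ is the only maximal ideal containing $\mathbf{y}$, one writes $e(R_{\fm})=\chi(\mathbf{y};R)=\sum_{i}(-1)^{i}\ell\,\HH_{i}(\mathbf{y};R)$; the finitely many Koszul homology modules $\HH_{i}(\mathbf{y};R_{A})$ are free $A$-modules whose ranks compute these lengths both after $\otimes_{A}K$ and after $\otimes_{A}\kappa(\fp)$ (using that the specialized Koszul homology is still supported only at $\fm$), whence $e(R_{\fm})=e\bigl((R_{\kappa(\fp)})_{\fm}\bigr)$. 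You already placed the Koszul complex in your descent data; to close the gap you should use its Euler characteristic, not the Hilbert function, to transfer the multiplicity.
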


\begin{lem}\label{Lemma: descent properties}
  There exists descent data $(A, R_A)$ for $R$ with the following properties.
  \begin{enumerate}
  \item[(a)] $y_1, \dots, y_d\in R_A$,
  \item[(b)] for all maximal ideals  $\mathfrak{p}\subset A$ the images of $y_1, \dots, y_d$ in  $R_{\kappa(\mathfrak{p})}$  are a minimal reduction of $\mathfrak{m} R_{\kappa(\mathfrak{p})}$,
  \item[(c)] if $R_\mathfrak{m}$ is Cohen-Macaulay on its punctured spectrum, so is $R_{\kappa(\mathfrak{p})}$ for all maximal ideals  $\mathfrak{p}\subset A$.
  \item[(d)] if $R_\mathfrak{m}$ is unmixed, so is $R_{\fp}$ for all maximal ideals  $\mathfrak{p}\subset A$.
  \end{enumerate}
\end{lem}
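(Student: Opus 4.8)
The plan is to choose the descent data generously: into the list of descent objects I would put, besides $R$ and the elements $y_1,\dots,y_d$ themselves (so that (a) holds as soon as their finitely many coefficients are adjoined to $A$), a finite generating set of $I$, the Koszul complex $K_\bullet(y_1,\dots,y_d;R)$, a finite free $T$-resolution $F_\bullet$ of $R$ together with its dual $\Hom_T(F_\bullet,T)$ (so that the modules $\Ext^i_T(R,T)$, hence an ideal cutting out the non-Cohen--Macaulay locus of $R$, are available), and a finite free resolution witnessing a primary decomposition of the zero ideal of $R$. By the generic freeness lemma (\cite[Ch.~2]{HochsterHunekeTightClosureInEqualCharactersticZero}) we may, after replacing $A$ by a localization $A[1/a]$, assume that every image, kernel, cokernel and homology module occurring among these objects is $A$-free; by the remark following Definition \ref{Definition: descent} each of them then commutes with $-\otimes_A\kappa(\mathfrak p)$ for \emph{every} maximal ideal $\mathfrak p\subset A$, and this is essentially the only property of the fibres we shall use.

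For (b): by the normalization made at the start of the section, $\fm$ is the only maximal ideal of $R$ containing $\mathbf y=y_1,\dots,y_d$, so $R/(\mathbf y)$ is Artinian local and the reduction identity $\fm^{r+1}=(\mathbf y)\fm^r$ holds already in $R$ for some $r$ --- the finitely generated module $\fm^{r+1}/(\mathbf y)\fm^r$ vanishes after localizing at $\fm$ and, $(\mathbf y)$ being the unit ideal at every other maximal ideal, vanishes everywhere. Adjoining to $A$ the coefficients occurring in the identities expressing the monomials of degree $r+1$ through $(\mathbf y)\fm^r$, these identities hold over $R_A$, and reducing them modulo any maximal $\mathfrak p\subset A$ gives $\fm^{r+1}R_{\kappa(\mathfrak p)}=(\mathbf y)\fm^r R_{\kappa(\mathfrak p)}$; thus $\mathbf y$ is a reduction of $\fm R_{\kappa(\mathfrak p)}$ and $R_{\kappa(\mathfrak p)}/(\mathbf y)$ has finite length over $\kappa(\mathfrak p)$. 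To see that this $d$-generated reduction is a minimal one it remains to check $\dim (R_{\kappa(\mathfrak p)})_\fm=d$. Here I would use the Koszul complex in the descent data: $H_i(\mathbf y;R)$ is annihilated by the $\fm$-primary ideal $(\mathbf y)$, hence of finite length, so $A$-freeness gives $\ell_{\kappa(\mathfrak p)}H_i(\mathbf y;R_{\kappa(\mathfrak p)})=\rank_A H_i(\mathbf y;R_A)=\dim_K H_i(\mathbf y;R)$; therefore the Koszul Euler characteristic $\sum_i(-1)^i\ell\bigl(H_i(\mathbf y;(R_{\kappa(\mathfrak p)})_\fm)\bigr)$ equals the corresponding number for $R_\fm$, which is $e(R_\fm)\ge 1$. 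Serre's theorem --- this Euler characteristic vanishes as soon as the number of parameters exceeds the dimension --- then forces $\dim (R_{\kappa(\mathfrak p)})_\fm\ge d$, and the reverse inequality is clear since $(\mathbf y)$ is $\fm$-primary there. (The same computation incidentally yields $e((R_{\kappa(\mathfrak p)})_\fm)=e(R_\fm)$, which is what the proof of Theorem \ref{Theorem: the bound in characteristic zero} will need.)

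For (c) and (d): the non-Cohen--Macaulay locus of $R$ is closed, say equal to $\V(\mathfrak c_R)$ with $\mathfrak c_R$ extracted from the modules $\Ext^i_T(R,T)$; since $F_\bullet$ and its dual lie in the descent data these Ext modules commute with reduction mod $\mathfrak p$, so $\mathfrak c_{R_{\kappa(\mathfrak p)}}$ is the image of $\mathfrak c_{R_A}$ and cuts out the non-Cohen--Macaulay locus of $R_{\kappa(\mathfrak p)}$. Now $R_\fm$ is Cohen--Macaulay on the punctured spectrum precisely when $g\fm^t\subseteq\mathfrak c_R$ for some $g\notin\fm$ and some $t\ge1$; adjoining the coefficients of such a relation and inverting the nonzero value of $g$ modulo $\fm$, this inclusion persists over $R_A$ and reduces mod $\mathfrak p$ to the analogous inclusion in $R_{\kappa(\mathfrak p)}$, which again says exactly that $R_{\kappa(\mathfrak p)}$ is Cohen--Macaulay on its punctured spectrum, giving (c). Part (d) is handled the same way with a primary decomposition of $(0)$ in $R$ in place of $F_\bullet$: for each minimal prime $\mathfrak q$ of $R_\fm$ the quotient $R/\mathfrak q$ has dimension $d$ at $\fm$ with $\mathbf y$ a system of parameters, so the Euler-characteristic argument of (b) applied to $R/\mathfrak q$ shows these dimensions are preserved, and the embedded components descend as well, whence $R_\fm$ unmixed forces $R_{\kappa(\mathfrak p)}$ unmixed. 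I expect the genuine obstacle to live entirely in this last paragraph --- namely, realizing the non-Cohen--Macaulay locus and the unmixedness condition as data that manifestly survive reduction modulo $\mathfrak p$; no new idea is required, since the needed facts (generic freeness, preservation of dimension, base change of $\Ext$ and of primary decomposition) are all contained in Chapter~2 of \cite{HochsterHunekeTightClosureInEqualCharactersticZero}, but the bookkeeping is where the work is.
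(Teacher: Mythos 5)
Your proposal follows essentially the same route as the paper's proof: adjoin the finitely many coefficients for (a); descend the reduction equation $\fm^{s+1}\subseteq(\mathbf y)\fm^{s}$ (valid globally because $\fm$ is the only maximal ideal containing $\mathbf y$) for (b); for (c), after localizing at one element so the non-Cohen--Macaulay locus sits inside $\{\fm\}$, descend the vanishing of the higher $\Ext^i_T(R,T)$ killed by a power of $\fm$ using a free resolution, its dual and generic freeness; and rely on the Hochster--Huneke descent machinery for (d), which the paper simply cites as Theorem 2.3.9 there. Two small remarks: the defining ideal $\mathfrak c_R$ of the non-Cohen--Macaulay locus (being built from annihilators) need not itself commute with base change, so one should descend the vanishing statements $g\,\fm^{t}\Ext^i_T(R,T)=0$ directly, exactly as the paper does; and your Koszul--Euler-characteristic argument for the minimality of the reduction (preservation of $\dim$ and of $e$) is carried out in the paper only later, inside the proof of Theorem \ref{Theorem: the bound in characteristic zero}, so front-loading it here is a harmless strengthening rather than a deviation.
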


\begin{proof}
  Start with some descent data $(A, R_A)$ where $A$ contains all K-coefficients among a set of generators $g_1, \dots, g_\mu$ of $I$, $I_A$ is the ideal of $A[x_1, \dots, x_n]$
  generated by  $g_1, \dots, g_\mu$ and
  $R_A=A[x_1, \dots, x_n]/I_A$. Let $\mathbf{x}$  denote $(x_1, \dots, x_n)$.
  For (a) write $y_i=Q_i(x_1, \dots, x_n)+I$ for all $1\leq i\leq d$ and extend $A$ to include all the K-coefficients in $Q_1, \dots, Q_d$.

  Assume that $\mathfrak{m}^{s+1} \subseteq \mathbf{y}\mathfrak{m}^s$ for some $s$. Write each monomial of degree $s+1$ in the form
  $r_1(\mathbf{x}) Q_1(\mathbf{x}) + \dots + r_d(\mathbf{x}) Q_d(\mathbf{x}) + a(\mathbf{x})$ where $r_1, \dots, r_d$ are polynomials of degrees at least $s$
  and $a(\mathbf{x})\in I$; enlarge $A$ to include all the $K$-coefficients of $r_1, \dots, r_d, a$. With this enlarged $A$ we have
  $(\mathbf{x}R_A)^{s+1} \subseteq (\mathbf{y} R_A) (\mathbf{x} R_A)^s$ and tensoring with any $\kappa(\fp)$ gives
  $( \mathbf{x}  R_{\kappa(\fp)})^{s+1} \subseteq (\mathbf{y} R_{\kappa(\fp)}) (\mathbf{x} R_\kappa(\fp))^s$.

  If $R_\mathfrak{m}$ is Cohen-Macaulay on its punctured spectrum, then we can find a localization of $R$ at one element whose only point at which it can fail
  to be non-Cohen-Macaulay is $\fm$. After adding a new variable to $R$ as at the beginning of this section, we may assume that the non-Cohen-Macaulay locus of $R$ is contained in $\{ \fm \}$.
  The hypothesis in (c) is now equivalent to the existence of a $k\geq 1$ such that $\mathfrak{m}^k \Ext_T^i(R,T)=0$ for all $\height I < i \leq n$.
  Let $\mathcal{F}$ be a free $T$-resolution of $R$. 
  Include $\fm$, $\mathcal{F}$ and $\mathcal{C}=\Hom(\mathcal{F}, T)$ in the descent objects.
  Now, with the corresponding descent data,
  $\mathcal{F}_A$ is a $T_A$-free resolution of $R_A$.
  Localize $A$ at one element, if necessary, so that $\fm_A^k  \Ext_{T_A}^i (R_A, T_A)$ is $A$-free for all $\height I < i \leq n$.
  Fix any  $\height I < i \leq n$; we have
  $$ \Ext_{T_A}^i (R_A, T_A) \otimes_A K=\HH^i( \Hom(\mathcal{F_A}, T_A) ) \otimes_A K= \HH^i( \mathcal{C_A} ) \otimes_A K = \HH^i ( \mathcal{C} ) $$
  and hence $\fm_A^k  \Ext_{T_A}^i (R_A, T_A) \otimes_A K =0$ so $\fm_A^k  \Ext_{T_A}^i (R_A, T_A)=0$. Now for any maximal ideal $\fp \subset A$,
  $\fm_{\kappa(\fp)}^k  \Ext_{T_{\kappa(\fp)}}^i (R_{\kappa(\fp)}, T_{\kappa(\fp)})=0$, and hence
  $R_{\kappa(\fp)}$ is Cohen-Macaulay on its punctured spectrum.

  The last statement is \cite[Theorem 2.3.9]{HochsterHunekeTightClosureInEqualCharactersticZero}.

\end{proof}

\begin{proof}[Proof of Theorem \ref{Theorem: the bound in characteristic zero}]
Using \cite[Theorem 4.6.4]{BrunsHerzog} we write
$e(R_{\fm})= \chi(\mathbf{y}; R_{\fm})$, and using the fact that $R$ was constructed so that $\fm$ is the only maximal ideal
containing $\mathbf{y}$, we deduce that
$e(R_{\fm})= \chi(\mathbf{y}; R)= \sum_{i=0}^d (-1)^i \ell_R \HH_i( \mathbf{y}, R)$.
We add to the descent objects in Lemma \ref{Lemma: descent properties} the Koszul complex $\mathcal{K}_\bullet(\mathbf{y}; R)$ and extend the descent data
in Lemma \ref{Lemma: descent properties} to cater for these.

For all $0\leq i\leq d$ we have $\HH_i(\mathbf{y}; R) \cong \HH_i(\mathbf{y}; R_A) \otimes_A K$ and
$\ell \left( \HH_i(\mathbf{y}; R) \right) = \rank \HH_i(\mathbf{y}; R_A)$.

Pick any maximal ideal $\fp \subset A$.
We have $\HH_i(\mathbf{y}; R_A)\otimes_A \kappa(\fp) \cong \HH_i(\mathbf{y}; R_{\kappa(\fp)})$.

Note that  that $\HH_i(\mathbf{y}; R_{\kappa(\fp)})$ is only supported at $\fm R_{\kappa(\fp)}$. Otherwise, we can find an $x\in \fm R_{\kappa(\fp)}$
such that
$0\neq \HH_i(\mathbf{y}; R_{\kappa(\fp)})_x \cong \HH_i(\mathbf{y}; R_A)_x \otimes_A \kappa(\fp)$ , hence
$\HH_i(\mathbf{y}; R_A)_x \neq 0$ and
$(\HH_i(\mathbf{y}; R_A) \otimes_A K)_x \cong \HH_i(\mathbf{y}; R)_x=0$,
contradicting the fact that $\Supp \HH_i(\mathbf{y}; R) \subseteq \{ \fm \}$.

Now
\begin{align*}
e((R_{\kappa(\fp)})_{\fm})= \chi(\mathbf{y}; (R_{\kappa(\fp)})_{\fm})&= \chi(\mathbf{y}; R_{\kappa(\fp)})\\
&= \sum_{i=0}^d (-1)^i \ell_R \HH_i( \mathbf{y}, R_{\kappa(\fp)})\\
&=\sum_{i=0}^d (-1)^i \rank \HH_i( \mathbf{y}, R_A)
\end{align*}
and so
Theorem \ref{thm: bound in GCM case} implies that $e(R_{\fm})=e((R_{\kappa(\fp)})_{\fm})\leq \binom{v}{d}$.

\end{proof}

\begin{remark}
In \cite{SchwedeFInjectiveAreDuBois} it is conjectured that being a $K$-algebra with dense $F$-injective type is equivalent to being a Du Bois singularity. Recently, the multiplicity of
Cohen-Macaulay Du Bois singularities has been bounded by $\binom{v}{d}$ (see \cite{Shibata2017}) and hence the results of this section provide further evidence for the conjecture above.
\end{remark}

\bibliographystyle{skalpha}
\bibliography{KatzmanBib}

\end{document}